\newtheorem{thm}{Theorem}[section]
\newtheorem{cor}[thm]{Corollary}
\newtheorem{lem}[thm]{Lemma}
\newtheorem{pro}[thm]{Proposition}
\numberwithin{equation}{section}
\theoremstyle{definition}
\theoremstyle{remark}
\newtheorem{rem}[thm]{Remark}
\DeclareMathOperator{\tr}{tr} \DeclareMathOperator{\ad}{ad}
\newcommand{\e}{\mathbf{1}}
\newcommand{\C}{\mathbb{C}}
\newcommand{\D}{\Delta}
\newcommand{\SD}{{\sqrt{\Delta}}}
\newcommand{\eps}{\varepsilon}
\newcommand{\func}{\varphi}
\newcommand{\g}{\mathfrak{g}}
\newcommand{\metric}{\mathbf{g}}
\newcommand{\N}{\mathbb{N}}
\newcommand{\R}{\mathbb{R}}
\newcommand{\RnG}{{\mathcal{R}_R(G)}}
\newcommand{\U}{\mathcal{U}}
\newcommand{\VG}{{\mathcal{V}(G)}}
\newcommand{\WRpT}{{\mathcal{W}_{R',\vartheta}}}
\newcommand{\GL}{\operatorname{GL}}
\newcommand{\Hc}{\mathcal{H}}
\newcommand{\gf}{\mathfrak{g}}
\title{On Sobolev norms for Lie group representations}
\subjclass[2000]{}
\begin{document}
\date{ \today }

\begin{abstract} We define Sobolev norms \textcolor{black}{of arbitrary real order} for a Banach representation
$(\pi, E)$ of a Lie group, with regard to a single differential operator $D=d\pi(R^2+\Delta)$.
Here, $\Delta$ is a Laplace element in the universal enveloping algebra, and $R>0$ depends
explicitly on the growth rate of the representation. In particular, we obtain a spectral gap for $D$ on the space of smooth vectors of $E$. The main tool is a novel factorization of the delta distribution on a Lie group.
\end{abstract}

\author[Gimperlein]{Heiko Gimperlein}
\email{h.gimperlein@hw.ac.uk}
\address{Maxwell Institute for Mathematical Sciences and Department of Mathematics, Heriot--Watt University, Edinburgh, EH14 4AS, United Kingdom}

\author[Kr\"otz]{Bernhard Kr\"{o}tz}
\email{bkroetz@gmx.de}
\address{Institut f\"ur Mathematik, Universit\"at Paderborn,\\ Warburger Stra\ss e 100,
33098 Paderborn}

\maketitle

\section{Introduction}

Let $G$ be a Lie group and $(\pi,E)$ be a Banach representation of $G$, that is,
a morphism of groups $\pi: G \to \GL(E)$ such that the orbit maps
$$ \gamma_v: G \to E, \ \ g\mapsto \pi(g)v ,$$
are continuous for all $v\in E$.
\par We say that a vector $v$ is $k$-times differentiable if $\gamma_v \in C^k(G, E)$
and write $E^k\subset E$ for the corresponding subspace. The smooth vectors are then defined by $E^\infty=\bigcap_{k=0}^\infty E^k$.
\par The space $E^k$ carries a natural Banach structure: if $p$ is a defining norm
for the Banach structure on $E$, then a $k$-th Sobolev norm of $p$ on $E^k$
is defined as follows:

\begin{equation} \label{def StSob} p_k (v):=\Big(\sum_{m_1 +\ldots +m_n\leq k} p(d\pi(X_1^{m_1}\cdot \ldots \cdot X_n^{m_n})v)^2\Big)^{1\over 2}\quad (v \in E^k)\, .\end{equation}
Here $X_1, \ldots, X_n$ is a fixed basis for the Lie algebra $\gf$ of $G$, and
$d\pi: \U(\gf)\to \operatorname{End}(E^\infty)$  is, as usual, the derived representation for the
universal enveloping algebra $\U(\gf)$ of $\gf$. Then $E^k$, endowed with the norm $p_k$, is
a Banach space and defines a Banach representation of $G$.  Furthermore, $E^\infty$ carries a natural Fr\'echet structure, induced by the Sobolev norms $(p_k)_{k \in \N_0}$.
The corresponding $G$-action on $E^\infty$ is smooth and of moderate growth, i.e.~an
$SF$-representation in the terminology of  \cite{bk}.

\par In case $(\pi, \Hc)$ is a unitary representation on a Hilbert space $\Hc$, there is
an efficient  way to define the Fr\'echet structure on $\Hc^\infty$
via a Laplace element

\begin{equation}\label{def D1} \Delta =  -\sum_{j=1}^n X_j^2\end{equation}
in $\U(\gf)$.  More specifically, one defines the $2k$-th Laplace Sobolev norm in this case
by
\begin{equation}\label{Lap-Sob} {}^\Delta p_{2k} (v) := p (d\pi(({\bf 1}+ \Delta )^k) v) \qquad (v\in E^{2k})\, .\end{equation}
The unitarity of the action then implies that the standard Sobolev norm
$p_{2k}$ is equivalent to ${}^\Delta p_{2k}$.

\par For a general Banach representation $(\pi, E)$ we still have
$E^\infty=\bigcap_{k=0}^\infty \operatorname{dom}(d\pi(\Delta^k))$, but it is no longer true that
${}^\Delta p_{2k}$, as defined in \eqref{Lap-Sob}, is equivalent to $p_{2k}$:
it typically  fails that $p_{2k}$ is dominated by $^{\Delta}p_{2k}$, for example if
$-1\in \operatorname{spec} (d\pi(\Delta))$ or if elliptic regularity fails as in Remark \ref{regremark} below.

\par In the following we use $\Delta$ for the expression \eqref{def Delta}, a first-order
modification of $\Delta$ as defined in \eqref{def D1}, in order  to make $\Delta$ selfadjoint
on $L^2(G)$. In case $G$ is unimodular, we remark that the two notions \eqref{def Delta} and
\eqref{def D1}  coincide.

One of the main results of this note is that every Banach representation
$(\pi, E)$ admits a constant $R=R(E)>0$ such that the operator $d\pi(R^2+ \Delta) : E^\infty \to E^\infty$ is invertible, see Corollary \ref{cor spectral gap}. The constant $R$ is closely
related to the growth rate of the representation, i.e.~the growth of the weight
$w_\pi(g)=\|\pi(g)\|$.

More precisely, for the Laplace Sobolev norms defined as
\begin{equation}\label{Rost1} {}^\Delta p_{2k} (v) := p (d\pi((R^2 + \Delta)^k) v) \qquad (v\in E^{2k})\, ,\end{equation}
we show that the families $(p_{2k})_k $ and $\left({}^\Delta p_{2k}\right)_k$ are equivalent in the
following sense: Let $m$ be the smallest even integer greater \textcolor{black}{or} equal to $1+\dim G$. Then there
exist constants $c_k, C_k>0$ such that
$$ c_k \cdot {}^\Delta p_{2k}(v)\leq p_{2k}(v) \leq C_k \cdot {}^\Delta p_{2k+m}(v)\qquad (v\in E^\infty)\, .$$

\par The above mentioned results follow from a novel factorization
of the delta dis\-tri\-bution $\delta_{\bf 1}$  on $G$, see Proposition \ref{factorize} in the main text
for the more technical statement.
This in turn is a consequence of the functional calculus for $\sqrt{\Delta}$, developed
in \cite{cgt}, and previously applied to representation theory  in \cite{gkl} to derive factorization results for
analytic vectors.
The functional calculus allows us to define Laplace Sobolev norms for any order $s\in \R$ by
$${}^\Delta p_{s} (v) := p (d\pi((R^2+\Delta)^{s\over 2}) v) \qquad (v \in E^\infty)\,.$$
On the other hand \cite{bk} provided another definition of Sobolev norms
 for any order $s\in \R$; they were denoted $Sp_s$ and termed induced Sobolev norms
there. The norms $Sp_s$ were based on a noncanonical localization to a neighborhood of $\e \in G$, identified with the unit ball in $\R^n$, and used the $s$-Sobolev norm on $\R^n$. We show that the two notions ${}^\Delta p_{s}$ and $Sp_s$ are equivalent up to
constant shift in the parameter $s$, see Proposition \ref{prop compare}. The more geometrically defined norms ${}^\Delta p_{s}$ may therefore replace the norms $Sp_s$ in \cite{bk}.

 \par Our motivation for this note stems from harmonic analysis on homogeneous spaces, see
 for example \cite{b} and \cite{dkks}.
 Here one  encounters naturally the dual representation of some $E^k$ and in this context it is often quite cumbersome to estimate the dual norm of
$p_k$, caused by the many terms in the definition \eqref{def StSob}. On the other hand
the dual norm of ${}^\Delta p_s$, as defined by one operator $d\pi((R^2+\Delta)^{s\over 2})$, is easy to control and simplifies a variety of technical issues.

\section{Some geometric analysis on Lie groups}\label{SectionGeometricAnalysis}

Let $G$ be a Lie group of dimension $n$ and $\metric$ a left invariant Riemannian metric on $G$.
The Riemannian measure $dg$ is a left
invariant Haar measure on $G$.
We denote by $d(g,h)$ the distance function associated to $\metric$
(i.e.~the infimum of the lengths of all paths connecting group elements $g$ and $h$), by $B_r(g) = \{x \in G\ |\  d(x,g)<r\}$ the ball of radius $r$ centred at $g$,
and we set
$$ d(g) :=d(g,\e) \qquad (g\in G) \, .$$
Here are two key properties of $d(g)$, which will be relevant later, see \cite{garding}:
\begin{lem}\label{lem=sm} If
$w : G \to \R_+$ is locally bounded and submultiplicative (i.e.~$w(gh) \leq w(g)w(h)$), then there exist
$c_1,C_1>0$ such that
$$w(g) \leq C_1 e^{c_1 d(g)} \qquad (g\in G)\ .$$
\end{lem}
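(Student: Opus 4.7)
The plan is to exploit the fact that the left-invariant Riemannian distance is a length metric, so every group element of distance $L$ from the identity can be written as a product of $N$ factors, each living in a fixed small ball, with $N$ roughly proportional to $L$. Submultiplicativity then converts the number of factors into an exponential in $L$.

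More concretely, first I would choose a radius $r>0$ and set $M := \sup_{g \in \overline{B_r(\mathbf{1})}} w(g)$, which is finite by local boundedness (after possibly shrinking $r$ so the closed ball is compact; this is possible since $\metric$ is Riemannian). I may assume $M \geq 1$, replacing $M$ by $\max(M,1)$ if necessary. The goal is to show $w(g) \leq M^{\lceil d(g)/r \rceil + 1}$.

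Next I would decompose $g$. Given $g \in G$ with $d(g) = L$, pick, for any $\eps>0$, a piecewise smooth path $\orb:[0,L+\eps]\to G$ from $\mathbf{1}$ to $g$ of length at most $L+\eps$, parametrized by arc length. Choose $N = \lceil (L+\eps)/r\rceil$ and subdivide $[0,L+\eps]$ into $N$ intervals of length at most $r$, giving points $\mathbf{1} = \orb(t_0), \orb(t_1), \ldots, \orb(t_N) = g$. Set $g_i := \orb(t_{i-1})^{-1}\orb(t_i)$ so that $g = g_1 g_2 \cdots g_N$. By left invariance of $\metric$,
\[
d(g_i) \;=\; d(\mathbf{1}, \orb(t_{i-1})^{-1}\orb(t_i)) \;=\; d(\orb(t_{i-1}), \orb(t_i)) \;\leq\; t_i - t_{i-1} \;\leq\; r,
\]
so each $g_i \in \overline{B_r(\mathbf{1})}$ and hence $w(g_i) \leq M$. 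Submultiplicativity then yields
\[
w(g) \;\leq\; \prod_{i=1}^N w(g_i) \;\leq\; M^N \;\leq\; M^{(L+\eps)/r + 1}.
\]
Letting $\eps\to 0$ gives $w(g) \leq M \cdot \exp\!\left(\tfrac{\log M}{r}\, d(g)\right)$, which is the claim with $C_1 = M$ and $c_1 = (\log M)/r$.

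The only mild obstacle is ensuring that an almost-minimizing path between $\mathbf{1}$ and $g$ actually exists; this is standard for a Riemannian manifold, as the distance function is defined precisely as the infimum over lengths of piecewise smooth paths, so for any $\eps>0$ such a path exists. Completeness of $G$ (and thus existence of an honest minimizing geodesic) is not needed. Everything else is a one-line application of left invariance and submultiplicativity.
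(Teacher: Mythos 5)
Your proof is correct. Note that the paper contains no proof of this lemma at all---it simply defers to the cited reference (G\aa rding, 1947)---and your path-subdivision argument is precisely the classical one: writing an element at distance $L$ from $\mathbf{1}$ as a product of roughly $L/r$ factors, each lying in a fixed compact ball where $w$ is bounded by local boundedness, and then letting submultiplicativity convert the number of factors into an exponential in $d(g)$. The technical points you flag are handled correctly: a closed ball of sufficiently small radius is compact because the metric topology agrees with the manifold topology, left invariance of $\metric$ is exactly what makes $d(\orb(t_{i-1}),\orb(t_i)) = d\bigl(\orb(t_{i-1})^{-1}\orb(t_i)\bigr)$, and $M\geq 1$ is even automatic since $w(\mathbf{1})\leq w(\mathbf{1})^2$ forces $w(\mathbf{1})\geq 1$ (and if $M=1$ the claimed bound holds trivially with any $c_1>0$, so the requirement $c_1>0$ causes no issue).
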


\begin{lem} There exists $c_G>0$ such that for all $C>c_G$, $\int_G e^{-C d(g)} \ dg < \infty$.
\end{lem}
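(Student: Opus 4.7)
The plan is to reduce the integrability of $e^{-Cd}$ on $G$ to an exponential bound on the volume of Riemannian balls, followed by a standard annular decomposition. Writing $V(r) := \operatorname{vol}(B_r(\e))$, the target inequality will reduce to the convergence of a geometric series $\sum_k e^{(c_0-C)k}$, which holds as soon as $C$ exceeds the exponential growth rate $c_0$ of $V$.

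The first (and main) step is to establish that $V(r) \leq C_0 e^{c_0 r}$ for some $c_0, C_0 > 0$. I would prove this by a maximal-net argument. Pick a maximal $1$-separated subset $\{x_1, \dots, x_N\} \subset B_r(\e)$. Left-invariance of $\metric$ and of the Haar measure gives $\operatorname{vol}(B_\rho(x_i)) = V(\rho)$ for every $i$ and every $\rho > 0$; the $N$ balls $B_{1/2}(x_i)$ are pairwise disjoint and sit inside $B_{r+1/2}(\e)$, yielding $N\, V(1/2) \leq V(r+1/2)$. Maximality combined with a triangle-inequality argument along geodesics emanating from $\e$ produces the covering $B_{r+1}(\e) \subset \bigcup_i B_{2+\delta}(x_i)$ for any $\delta>0$, so
\[
V(r+1) \;\leq\; N\, V(2+\delta) \;\leq\; \frac{V(2+\delta)}{V(1/2)}\, V(r+1/2).
\]
Iterating this inequality in half-unit increments of $r$ delivers the exponential bound with $c_0 = 2\log\bigl(V(2+\delta)/V(1/2)\bigr)$. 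Crucially, only \emph{left} translates of balls are used, which matters because $G$ need not be unimodular.

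Once Step 1 is in hand, the lemma is immediate. Partition $G$ into the disjoint annuli $A_k := B_{k+1}(\e) \setminus B_k(\e)$, $k \in \N_0$. Since $d(g) \geq k$ on $A_k$,
\[
\int_G e^{-C d(g)}\, dg \;=\; \sum_{k=0}^\infty \int_{A_k} e^{-C d(g)}\, dg \;\leq\; \sum_{k=0}^\infty e^{-Ck}\, V(k+1) \;\leq\; C_0\, e^{c_0} \sum_{k=0}^\infty e^{(c_0-C)k},
\]
which converges whenever $C > c_0$. Taking $c_G := c_0$ completes the proof.

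The main obstacle is the volume bound in Step 1; the rest is a routine tail estimate. One might hope to derive Step 1 by applying Lemma~\ref{lem=sm} directly to some submultiplicative weight encoding ball volumes, but I do not see a natural candidate: the modular function is multiplicative and Lemma~\ref{lem=sm} controls it well, but that alone does not bound $V(r)$. The net-based covering above seems cleanest, and by using only left translates it avoids any complication from non-unimodularity.
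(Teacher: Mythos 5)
Your argument is correct, but there is nothing in the paper to compare it against: the paper offers no proof of this lemma at all, simply citing G\r{a}rding's note \cite{garding} for both stated properties of $d(g)$. Your self-contained route --- exponential volume growth $V(r)\le C_0e^{c_0 r}$ via a packing/covering argument exploiting homogeneity (left translates preserve both the distance and the Riemannian measure, which is left Haar), followed by the annular tail estimate --- is the standard argument, and it buys an explicit admissible constant $c_G=c_0$ (the volume growth rate of balls) as well as manifest independence from unimodularity, since only left translates occur. Three small points should be made explicit to finish it. First, the packing inequality $N\,V(1/2)\le V(r+1/2)$ only forces $N<\infty$ and feeds the iteration once you know $V(\rho)<\infty$ for every $\rho$; this follows from the completeness of $(G,\metric)$ noted in Section \ref{SectionGeometricAnalysis} together with Hopf--Rinow, which makes closed balls compact. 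Second, the covering step $B_{r+1}(\e)\subset\bigcup_i B_{2+\delta}(x_i)$ is the crux, and your sketch does work: for $y$ with $r\le d(y)<r+1$, cut a path from $\e$ to $y$ of length $<r+1+\delta'$ at a point $y'$ with $d(y')<r$ and $d(y',y)<1+\delta'+\delta''$, then apply maximality to $y'$; it is precisely this enlargement (bounding $V(r+1)$ from above by a multiple of $V(r+1/2)$) that makes the recursion run in the useful direction --- covering only $B_r(\e)$ by the $B_1(x_i)$ would give the vacuous inequality $V(r)\le N\,V(1)$ with $N\lesssim V(r+1/2)/V(1/2)$. Third, if $G$ is disconnected, then $d\equiv+\infty$ off the identity component $G_0$ (there are no connecting paths), so your annuli exhaust only $G_0$; this is harmless because the integrand vanishes on the other components, but it deserves a sentence.
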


\textcolor{black}{Convolution in this article is always left convolution, i.e.~for integrable functions
$\varphi, \psi\in L^1(G)$ we define $\varphi\ast\psi\in L^1(G)$ by
$$ \varphi\ast \psi(g)= \int_G \varphi(x)\psi(x^{-1} g) \ dx\qquad (g\in G)\, .$$
If we denote by $\mathcal{D}'(G)$ the space of distributions, resp.~by $\mathcal{E}'(G)$ the subspace of
compactly supported distributions, then the convolution above naturally extends to distributions provided
one of them is compactly supported, i.e.~lies in $\mathcal{E}'(G)$.}

Denote by $\VG$ the space of left-invariant vector fields on $G$.
It is common to identify the Lie algebra $\g$ with $\VG$ where $X\in \g$ corresponds to the
vector field $\widetilde X$ given by
$$(\widetilde{X} f) (g) = \frac{d}{dt}\Big|_{t=0} f(g \exp(t X))\ \qquad (g\in G, f\in C^\infty(G))\, .$$
We note that the adjoint of $\widetilde X$ on the Hilbert space
$L^2(G)$ is given by
$$\widetilde{X}^*= -\widetilde{X}  - \tr (\ad X)\, ,$$
and  $\widetilde X^*=-\widetilde X$ in case $\g$ is unimodular. Let us
fix an orthonormal basis ${\mathcal B}=\{ X_1, \dots, X_n\} $ of $\g$ with respect
to $\metric$. Then the Laplace--Beltrami operator $\Delta=d^*d$
associated to $\metric$ is given explicitly by
\begin{equation} \label{def Delta}\D = \sum_{j=1}^n (-\widetilde{X_{j}} - \tr (\ad X_j))\ \widetilde{X_{j}}\, .\end{equation}
As $(G, \metric)$ is complete, $\D$ is essentially selfadjoint \textcolor{black}{with spectrum contained in $[0,\infty)$}. We
denote by
$$\SD = \int \lambda \ dP(\lambda)$$
the corresponding spectral resolution. It provides us with a
measurable functional calculus, which allows to define
$$f(\SD)= \int f(\lambda)\ dP(\lambda)$$
as an unbounded operator $f(\SD)$ on $L^2(G)$ with domain
$$D(f(\SD))=\left\{\func \in L^2(G) \mid \int |f(\lambda)|^2\ d\langle P(\lambda)\func, \func\rangle < \infty \right\}.$$
We are going to apply the above calculus to
a certain function space. To do so, for $R'>0$ we define a region
\begin{eqnarray*}
\WRpT = \left\{z \in \C \mid |\operatorname{Im} z| < R' \right\} \cup \left\{z \in
\C \mid |\operatorname{Im} z| < \vartheta |\operatorname{Re} z| \right\}.
\end{eqnarray*}
For $R>0$, $s \in \R$, the relevant function space is then defined as
\begin{eqnarray*}\mathcal{F}_{R,s} &= \{f \in C^\infty(\mathbb{R}, \mathbb{C}) \mid f \text{ even, } \exists \vartheta>0\ \exists R'>R: f \in \mathcal{O}(\WRpT) ,\\ & \qquad  \forall k\in \N: \sup_{z \in \WRpT} |\partial_z^k f(z)|(1+|z|)^{k-s}<\infty\}\ .\end{eqnarray*}
\textcolor{black}{See the Appendix to \S2 in \cite{cgt} for a related space of functions.}\\

The resulting operators $f(\SD)$ are given by a distributional kernel $K_f \in \mathcal{D}'(G\times G)$, $\langle f(\SD)\ \func, \psi\rangle= \langle K_f, \func \otimes \psi\rangle$ for all $\func,\psi \in C^\infty_c(G)$. $K_f$ has the  following properties:
\begin{itemize}
\item smooth outside the diagonal: For $\Delta(G)=\{(g,g) | g \in G\}$, $K_f \in C^\infty(G
\times G \setminus \Delta(G))$,
\item left invariant: $K_f(gx,gy) = K_f(x,y)$,
\item hermitian: $K_f(x,y) = \overline{K_f(y,x)}$\ .
\end{itemize}
By left invariance $f(\SD)$ is a convolution operator with kernel $\kappa_f(x^{-1}y) := K_f(\e,x^{-1}y) = K_f(x,y)$:
\begin{equation} \label{conv0}\langle f(\SD)\ \func, \psi\rangle= \langle K_f, \func \otimes \psi\rangle  = \langle \kappa_f(x^{-1}y) , (\func \otimes \psi)(x,y)\rangle =
\langle \func \ast \kappa_f, \psi \rangle\end{equation}
for all $\func,\psi \in C^\infty_c(G)$.
The distribution $\kappa_f\in \mathcal{D}'(G)$ is a smooth function on $G\setminus \{\e\}$. \textcolor{black}{Because $K_f$ is hermitian, the
kernel $\kappa_f$ is involutive in the sense that
\begin{equation} \label{kappa symmetric}  \kappa_f(x) = \overline{\kappa_f(x^{-1})} \qquad (x \in G).  \end{equation} }
In particular,  $\kappa_f$ is  left differentiable at $x \in G \setminus\{\e\}$, if and only if it is right differentiable at $x$.

\par We define the weighted $L^1$-Schwartz space on $G$ by
$${\mathcal S}_R(G):=\{ f \in C^\infty(G)\mid \forall u, v  \in {\mathcal U} (\g): \ (\tilde u_l \otimes \tilde v_r) f \in L^1(G, e^{Rd(g)} dg) \}\ ,$$
where $\tilde u_l$, resp.~$\tilde v_r$, is the left, resp.~right, invariant differential operator
on $G$ associated with $u, v\in {\mathcal U}(\g)$.

A theorem by Cheeger, Gromov and Taylor
\cite{cgt} allows us to describe the global behavior of $\kappa_f$:
\begin{thm}\label{Kernel}
Let $R,\varepsilon>0$, $s \in \mathbb{R}$ and $f \in \mathcal{F}_{R,s}$. Then $\kappa_f  = \kappa_1 + \kappa_2$, where
\begin{enumerate}
\item $\kappa_1 \in \mathcal{E}'(G)$ is supported in $B_\varepsilon(\e)$, and $K_1(x,y) = \kappa_1(x^{-1}y)$ is the kernel of a pseudodifferential operator on $G$ of order $s$,
\item  $\kappa_2 \in \mathcal{S}_R(G)$.
\end{enumerate}
\end{thm}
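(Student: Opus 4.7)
The starting point is the standard wave-equation representation of the functional calculus. Since $f$ is even, Fourier inversion gives
$$ f(\SD) = \frac{1}{2\pi}\int_\R \hat f(t)\cos(t\SD)\, dt, $$
and by finite propagation speed for the wave equation $(\partial_t^2+\Delta)u=0$ on the complete Riemannian manifold $(G,\metric)$, the Schwartz kernel $\kappa_t$ of $\cos(t\SD)$ is supported in $\{g\in G:d(g)\leq |t|\}$. Fix a smooth even cutoff $\chi$ with $\chi\equiv 1$ near $0$ and $\operatorname{supp}\chi\subset(-\eps,\eps)$, and decompose
$$ \kappa_f=\kappa_1+\kappa_2,\qquad \kappa_1=\frac{1}{2\pi}\!\int \chi(t)\hat f(t)\kappa_t\, dt,\qquad \kappa_2=\frac{1}{2\pi}\!\int (1-\chi(t))\hat f(t)\kappa_t\, dt. $$
Finite propagation immediately gives $\operatorname{supp}\kappa_1\subset B_\eps(\e)$.

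For (1), I would observe that $\kappa_1$ is the convolution kernel of $f_1(\SD)$ with $f_1(\lambda):=\frac{1}{2\pi}\int\chi(t)\hat f(t)\cos(t\lambda)\,dt$. Since $\chi\hat f$ has compact support, $f_1$ extends to an entire function. The symbol estimates $|\partial_z^k f(z)|\leq C_k(1+|z|)^{s-k}$ imply, via $k$-fold integration by parts in the Fourier integral (valid for $k>s+1$), that $(1-\chi(t))\hat f(t)$ is a Schwartz function of $t$; hence $f-f_1\in\mathcal{S}(\R)$ and $f_1$ is still a symbol of order $s$ on $\R$. Because $\SD$ is a first-order elliptic self-adjoint operator on the Riemannian manifold $G$, the standard pseudodifferential calculus (which is a local construction near the diagonal) then shows that $f_1(\SD)$ is a $\Psi$DO of order $s$ with principal symbol $f_1\circ|\,\cdot\,|$, proving (1).

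For (2), I would invoke a Paley--Wiener contour shift. The holomorphy of $f$ on $\WRpT$ with $R'>R$, together with the symbol bounds on $\partial_z^k f$ in $\mathcal{F}_{R,s}$, yields $|\hat f(t)|\leq C_N(1+|t|)^{-N}e^{-R'|t|}$ for every $N\in\N$ and $|t|$ sufficiently large. Combined with finite propagation, for $d(g)>\eps$ the $t$-integration in $\kappa_2(g)$ is restricted to $|t|\geq d(g)$, and the integration by parts in $t$ using $\partial_t^2\cos(t\SD)=-\Delta\cos(t\SD)$ converts $t$-derivatives of $(1-\chi)\hat f$ (still exponentially decaying) into powers of $\Delta$ smoothing the wave kernel. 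A Sobolev embedding on $G$ then produces a pointwise estimate $|(\tilde u_l\otimes\tilde v_r)\kappa_2(g)|\leq C_{u,v}\,e^{-R'd(g)}$ for every $u,v\in\U(\g)$. Since $R'>R$, integrability against $e^{Rd(g)}\,dg$ follows from Lemma 2.2, placing $\kappa_2$ in $\mathcal{S}_R(G)$.

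The main obstacle is the last step: $\kappa_t$ is only a distribution, so extracting pointwise bounds on $\kappa_2$ and on its left/right invariant derivatives requires choosing the integration-by-parts order $N$ large enough both to kill the a priori polynomial growth of $\kappa_t$ in $t$ (a standard Sobolev/elliptic-regularity estimate on $G$) and to produce convergent tail integrals against $\hat f$. This is precisely where the symbol estimates encoded in $\mathcal{F}_{R,s}$ and the holomorphic extension to $\WRpT$ must be deployed in tandem.
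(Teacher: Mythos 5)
Your overall strategy is the correct one, and it is in fact the same one the paper relies on: the paper does not prove Theorem \ref{Kernel} directly but cites Cheeger--Gromov--Taylor (their Theorem 3.3 for part (1), their estimate (3.45) and Appendix to \S 2 for part (2)), and the CGT proofs are precisely the wave-equation/finite-propagation-speed decomposition you set up. Your treatment of part (1) (localize with $\chi$, note $f-f_1\in\mathcal{S}(\R)$, invoke the standard ``symbol of order $s$ gives $\Psi$DO of order $s$'' theorem, with finite propagation providing the locality needed on a noncompact manifold) is an acceptable sketch at essentially the same level of detail as the paper's citation.

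There is, however, a genuine gap at the end of your part (2). A minor point first: holomorphy with symbol bounds on the \emph{open} region $\WRpT$ yields $|\hat f(t)|\leq C_{N,r}(1+|t|)^{-N}e^{-r|t|}$ only for each $r<R'$, not for $r=R'$; this is harmless. The real problem is the inference ``since $R'>R$, integrability against $e^{Rd(g)}dg$ follows from Lemma 2.2.'' What your pointwise bound actually gives is
\begin{equation*}
\int_G \bigl|(\tilde u_l\otimes\tilde v_r)\kappa_2(g)\bigr|\,e^{Rd(g)}\,dg \;\lesssim\; \int_G e^{-(r-R)d(g)}\,dg ,
\end{equation*}
where $r-R>0$ may be arbitrarily small, because the definition of $\mathcal{F}_{R,s}$ only guarantees \emph{some} $R'>R$. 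Lemma 2.2 requires the exponent to exceed $c_G$, not merely to be positive, so the deduction is invalid whenever $G$ has exponential volume growth ($c_G>0$), and the failure is not an artifact of the method: on the solvable Lie group isometric to hyperbolic space $\mathbb{H}^n$ (where $c_G=2\rho=n-1$), the kernel of $(R'^2+\Delta)^{-m}$ decays exactly at the rate $e^{-(\rho+\sqrt{\rho^2+R'^2})\,d(g)}$, and its integral against $e^{Rd(g)}dg$ diverges whenever $R'^2\leq R^2+2R\rho$, which is perfectly compatible with $R'>R$. Thus what your estimates (and CGT's) deliver is $\kappa_2\in\mathcal{S}_{R''}(G)$ for every $R''<R'-c_G$; to land in $\mathcal{S}_R(G)$ one needs the margin $R'>R+c_G$, which must either be assumed or bought by choosing the width of holomorphy larger (as one can in the paper's application, where $f(z)=(R'^2+z^2)^{-m}$ and the pole location $R'$ is at our disposal). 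This quantitative point --- the exponential decay must beat the weight $R$ \emph{plus} the volume growth $c_G$ --- is exactly where the geometry of $G$ enters part (2), and your write-up passes over it.
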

\textcolor{black}{Part (1) is the content of Theorem 3.3 in \cite{cgt}. For (2), the pointwise decay of $\kappa_2$ is stated in (3.45)  there, while the Schwartz estimates are obtained as in their Appendix to \S2.}

From part (1) and the kernel estimates for pseudodifferential operators, we  obtain $\kappa_1 \in C^{-s-n-\varepsilon}_c(G)$ for $\varepsilon>0$ \textcolor{black}{small enough, provided $-s-n-\varepsilon>0$. Here $C^{\alpha}_c(G)$ denotes the space of H\"{o}lder continuous functions of order $\alpha>0$, with compact support.}

Applying the theorem to the function $f(z) = (R'^2+z^2)^{-m}$ for $m\in \N$, which lies
in ${\mathcal F}_{R,-2m}$ for any $R<R'$, we conclude the following factorization of the Dirac distribution $\delta_\e$:
\begin{pro}\label{factorize}
Let $R'>R>0$, $m \in \mathbb{N}$. Then
\begin{equation}\label{factorization}\delta_\e =   [(R'^2 + \Delta)^m \delta_\e] \ast \kappa
,\end{equation}
where $\kappa =\kappa_1 + \kappa_2$ \textcolor{black}{has the properties} from Theorem \ref{Kernel} with $s=-2m$.
\end{pro}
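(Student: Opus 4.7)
\par The plan is to apply Theorem \ref{Kernel} to the function $f(z) = (R'^2 + z^2)^{-m}$ and then invert $(R'^2+\Delta)^m$ using left invariance and associativity of convolution. The first and essentially only technical step is to verify that $f \in \mathcal{F}_{R,-2m}$. The function $f$ is even, and its only singularities in $\C$ are poles of order $m$ at $z=\pm iR'$. Choose any $R_0$ with $R<R_0<R'$ and any $\vartheta>0$. The poles $\pm iR'$ lie outside $\mathcal{W}_{R_0,\vartheta}$: they are excluded from the strip $\{|\operatorname{Im} z|<R_0\}$ because $R_0<R'$, and from the cone $\{|\operatorname{Im} z|<\vartheta|\operatorname{Re} z|\}$ because the latter requires $\operatorname{Re} z\neq 0$ whenever $\operatorname{Im} z\neq 0$. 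Hence $f\in\mathcal{O}(\mathcal{W}_{R_0,\vartheta})$, and a direct estimate $|\partial_z^k f(z)|\leq C_k(1+|z|)^{-2m-k}$, uniform on $\mathcal{W}_{R_0,\vartheta}$ since $|R'^2+z^2|$ stays bounded below there, yields $\sup|\partial_z^k f(z)|(1+|z|)^{k+2m}<\infty$ for every $k$, so $f\in\mathcal{F}_{R,-2m}$.

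\par Theorem \ref{Kernel} then provides a decomposition $\kappa_f=\kappa_1+\kappa_2$ with the properties claimed in the proposition. The measurable functional calculus identifies $f(\SD)=(R'^2+\Delta)^{-m}$ as a bounded operator on $L^2(G)$, and formula \eqref{conv0} reads
$$(R'^2+\Delta)^{-m}\varphi=\varphi\ast\kappa\qquad(\varphi\in C^\infty_c(G)).$$
For such $\varphi$ the function $(R'^2+\Delta)^m\varphi$ is smooth and compactly supported, hence in $L^2(G)$, and applying $(R'^2+\Delta)^{-m}$ to it recovers $\varphi$:
$$\varphi=\bigl[(R'^2+\Delta)^m\varphi\bigr]\ast\kappa.$$

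\par Since $(R'^2+\Delta)^m$ is a left-invariant differential operator, it can be realized as right convolution with the compactly supported distribution $(R'^2+\Delta)^m\delta_\e$; that is, $(R'^2+\Delta)^m\varphi=\varphi\ast[(R'^2+\Delta)^m\delta_\e]$ for $\varphi\in C^\infty_c(G)$. Substituting this above and using associativity of convolution (legitimate because $(R'^2+\Delta)^m\delta_\e$ has compact support), I obtain
$$\varphi=\varphi\ast\bigl\{[(R'^2+\Delta)^m\delta_\e]\ast\kappa\bigr\}\qquad(\varphi\in C^\infty_c(G)).$$
Since this holds for every test function $\varphi$, the distributional identity $[(R'^2+\Delta)^m\delta_\e]\ast\kappa=\delta_\e$ follows. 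The main obstacle is the first paragraph, namely the verification that $f$ satisfies the holomorphy and seminorm conditions defining $\mathcal{F}_{R,-2m}$; the remainder is a formal consequence of the left invariance of $(R'^2+\Delta)^m$ and associativity of convolution.
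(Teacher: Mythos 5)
Your proof is correct and takes essentially the same route as the paper's: both apply the functional calculus to $f(z)=(R'^2+z^2)^{-m}$ to obtain $\varphi=[(R'^2+\Delta)^m\varphi]\ast\kappa$ for all $\varphi\in C_c^\infty(G)$, and then upgrade this identity to the delta distribution --- the paper by passing to the limit along a Dirac sequence $\varphi_n\to\delta_\e$, you by writing $(R'^2+\Delta)^m\varphi=\varphi\ast[(R'^2+\Delta)^m\delta_\e]$ and invoking associativity, which in the end rests on the same approximate-identity argument to pass from $\varphi\ast w=\varphi$ for all test functions to $w=\delta_\e$. Your explicit verification that $f\in\mathcal{F}_{R,-2m}$ fills in a detail that the paper merely asserts before stating the proposition, and is a correct and worthwhile addition.
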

\begin{proof}\textcolor{black}{ Set $T:=f(\sqrt{\Delta})$ and $S:=\frac{1}{f}(\sqrt{\Delta})$. Notice that $S(\varphi) = (R'^2 + \Delta)^m\varphi\in C_c^\infty(G)$ and thus
$TS(\varphi)=\varphi$ for all $\varphi \in C_c^\infty(G)$ by the functional calculus. In particular,
\begin{equation} \label{conv1} \varphi= [(R'^2+\Delta)^m\varphi] \ast \kappa\end{equation}
since $T$ is given by right convolution with $\kappa=\kappa_f$, see \eqref{conv0}.
Choose a Dirac sequence $\varphi_n \to\delta_\e$. Passing to the limit in \eqref{conv1} yields
\begin{equation} \label{conv2} \delta_\e =   [(R'^2 + \Delta)^m \delta_\e] \ast \kappa, \end{equation}
as asserted.} \end{proof}

\section{Banach representation of Lie groups}
In this section we briefly recall some basics on Banach representation of Lie groups
and apply Proposition \ref{factorize} to the factorization of  vectors in $E^k$.

\par For a Banach space $E$ we denote by $GL(E)$ the associated
group of \textcolor{black}{topological linear} isomorphisms. By a {\it
Banach representation} $(\pi, E)$ of a Lie group $G$ we understand a group homomorphism
$\pi: G \to GL(E)$ such that the action
$$ G\times E \to E, \ \ (g, v)\mapsto \pi(g)v,$$
is continuous. For a vector $v\in E$ we denote by
$$\gamma_v: G\to E, \ \ g\mapsto \pi(g)v,$$
the corresponding continuous orbit map. Given $k \in \mathbb{N}_0$, the subspace $E^{k} \subset E$ consists of all $v\in E$ for which $\gamma_v \in C^k$. We write $E^\infty = \bigcap_k E^{k}$ and refer to $E^\infty$ as the space of smooth vectors.  Note that all
$E^k$ for $k\in \N_0\cup\{\infty\}$ are $G$-stable.

\begin{rem} \label{B-rep} Let $(\pi, E)$ be a Banach representation.
The uniform boundedness principle implies that
the function
$$w_\pi: G \to \R_+, \ \ g\mapsto \|\pi(g)\|,$$
satisfies the assumptions of Lemma \ref{lem=sm}. \end{rem}
Let
$$c_\pi:=\inf\{c >0\mid \exists C>0: \ w_\pi(g)\leq C e^{c d(g)}\}\, .$$

For $R>0$ we introduce the exponentially weighted spaces
$$\RnG := L^1(G, w_R dg), \ \ w_R(g) = e^{R d(g)}\ .$$
\textcolor{black}{ Notice that $\RnG \subset \mathcal{R}_{R'}(G)$ for $R>R'$ and that the corresponding Fr\'echet algebra
$\mathcal{R}(G):=\bigcap_{R>0} \RnG$ is independent of the particular choice of the metric
$\metric$. }

Denote by $\pi_l$ the left regular representation of $G$ on $\RnG$, and by $\pi_r$ the right regular representation.
A simple computation shows that $\RnG$ becomes a Banach algebra
under left convolution
$$\func*\psi(g)=\int_G \func(x)\ [\pi_l(x)\psi](g) \ dx \qquad (\func, \psi \in \RnG, g\in G)\, $$
for $R>c_G$.

More generally, whenever $(\pi, E)$ is a Banach representation, Lemma \ref{lem=sm} and Remark \ref{B-rep} imply
that
$$\Pi(\func)v:= \int_G \func(g)\ \pi(g)v\ dg \qquad (\func\in \RnG, v\in E)$$
defines an absolutely convergent Banach space valued integral for $R>R_E:=c_\pi+c_G$. Hence the prescription
$$\RnG\times  E\to E, \ \ (\func, v)\mapsto \Pi(\func)v,$$
defines a continuous algebra action  of $\RnG$ (here continuous refers to the continuity of the
bilinear map  $\RnG\times E\to E$).

As an example, the left-right representation $\pi_l\otimes \pi_r$ of $G\times G$ also induces a Banach representation on $\RnG$.

\par Our concern is now with the \textcolor{black}{space of} $k$-times differentiable vectors $\RnG^{k}$
of $(\pi_l\otimes \pi_r, \RnG)$.
 It is clear that $\RnG^{k}$
is a subalgebra of $\RnG$ and that
$$\Pi(\RnG^{k})\ E \subset E^{k}\ ,$$
whenever $(\pi, E)$ is a Banach representation and $R>R_E$.

\begin{thm} Let $R>0$ and $k=2m$ for $m\in \N$. Set $k':=k - \dim G - 1 \geq 0$. Then there exists a
$\kappa\in \RnG^{k'}$ such that: For all Banach representations $(\pi, E)$ with
$R>R_E$ one has the following factorization of $k$-times differentiable vectors
\begin{equation}\label{vfactorization} v =   \Pi (\kappa) d\pi ((R^2 + \Delta)^m)v \qquad (v\in E^k)\, .\end{equation}
\end{thm}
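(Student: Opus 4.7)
The plan is to invoke Proposition~\ref{factorize} to obtain a kernel $\kappa$ that inverts $(R^2+\Delta)^m$ in the convolution sense, verify $\kappa \in \RnG^{k'}$ by handling its pseudodifferential and Schwartz parts separately, and then descend the distributional identity for $\delta_\e$ to the claimed vector identity by testing against a Dirac sequence and integrating by parts.

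First I would set $\kappa = \kappa_1 + \kappa_2$ as in Proposition~\ref{factorize}, applied to $f(z) = (R^2+z^2)^{-m}$ (with the proposition's parameters adjusted so that the decay rate of $\kappa_2$ matches the weight $w_R$ and the factorization involves $(R^2+\Delta)^m$). From Theorem~\ref{Kernel} with $s = -2m$ and the H\"older estimate stated after it, $\kappa_1 \in C^{2m - n - \varepsilon}_c(G)$ for any small $\varepsilon > 0$; choosing $\varepsilon < 1$ puts $\kappa_1$ in $C^{k'}_c(G)$ with $k' = 2m - n - 1$. Being compactly supported and classically $C^{k'}$, every mixed left-right derivative $(\tilde u_l \otimes \tilde v_r)\kappa_1$ of total order at most $k'$ is continuous with compact support, hence in $\RnG = L^1(G, w_R\,dg)$, and a standard approximation argument gives $\kappa_1 \in \RnG^{k'}$. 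For $\kappa_2 \in \mathcal{S}_R(G)$, the defining property $(\tilde u_l \otimes \tilde v_r) \kappa_2 \in L^1(G, w_R\,dg) = \RnG$ for all $u,v \in \U(\g)$ yields $\kappa_2 \in \RnG^\infty \subset \RnG^{k'}$. Summing, $\kappa \in \RnG^{k'}$.

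For the vector identity, I would use the self-adjointness of $(R^2+\Delta)^m$ on $L^2(G)$ together with the self-involutive property \eqref{kappa symmetric} of $\kappa$: taking the distributional involution $\varphi \mapsto \varphi^*$ in the factorization $\delta_\e = [(R^2+\Delta)^m\delta_\e]*\kappa$ of Proposition~\ref{factorize} reverses the convolution while fixing both factors, giving the companion identity $\delta_\e = \kappa * [(R^2+\Delta)^m \delta_\e]$. Choose a Dirac sequence $\psi_n \in C_c^\infty(G)$ with $\psi_n \to \delta_\e$; the same functional-calculus step used inside the proof of Proposition~\ref{factorize} produces $\psi_n = \kappa * [(R^2+\Delta)^m\psi_n]$. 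Apply $\Pi(\cdot)v$ for $v \in E^k = E^{2m}$, use the homomorphism property $\Pi(\kappa * \varphi) = \Pi(\kappa)\Pi(\varphi)$, and move the differential operator onto $v$ via the integration-by-parts identity
$$\Pi((R^2+\Delta)^m \psi_n) v = \Pi(\psi_n)\, d\pi((R^2+\Delta)^m) v,$$
which holds for $v \in E^{2m}$ because $(R^2+\Delta)^m$ is self-adjoint (iterating the elementary relation $\Pi(\tilde X \varphi)v = \Pi(\varphi) d\pi(\tilde X^*)v$). Passing to the limit $n \to \infty$, the Dirac-sequence property yields $\Pi(\psi_n)v \to v$ and $\Pi(\psi_n)\, d\pi((R^2+\Delta)^m)v \to d\pi((R^2+\Delta)^m) v$, and continuity of $\Pi(\kappa)$ on $E$ gives $v = \Pi(\kappa)\, d\pi((R^2+\Delta)^m)v$.

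The main obstacle I anticipate is the regularity count for $\kappa_1$: the loss $k' = k - \dim G - 1$ is dictated precisely by the pseudodifferential H\"older exponent $-s - n - \varepsilon = 2m - n - \varepsilon$ after rounding down, so the arithmetic is tight. One must also confirm that classical $C^{k'}$-regularity of a compactly supported function on $G$ translates into $\RnG^{k'}$-differentiability under the combined left-right regular representation; this is standard but requires a small amount of care, in particular to handle modular-function factors in the change of variables when $G$ is not unimodular.
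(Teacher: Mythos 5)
Your verification that $\kappa=\kappa_1+\kappa_2\in\RnG^{k'}$ is correct and is essentially the paper's own argument. The gap lies in the single convolution identity on which your second half rests, namely $\psi_n=\kappa\ast[(R^2+\Delta)^m\psi_n]$. This is false on a general Lie group. The functional calculus produces \emph{right}-convolution operators: since $\Delta$ is built from left-invariant vector fields, $f(\SD)$ commutes with left translations, so it is right convolution by $\kappa$, and the identity it yields is \eqref{conv1}, that is $\psi_n=[(R^2+\Delta)^m\psi_n]\ast\kappa$, with $\kappa$ on the other side. Your companion identity $\delta_\e=\kappa\ast[(R^2+\Delta)^m\delta_\e]$ is itself true (your involution argument is fine: the paper's $\Delta$ is formally self-transpose and $\kappa$ is real and involutive), but convolving it on the right with $\psi_n$ gives $\psi_n=\kappa\ast\bigl([(R^2+\Delta)^m\delta_\e]\ast\psi_n\bigr)$, and \emph{left} convolution of $\psi_n$ with derivatives of $\delta_\e$ produces \emph{right}-invariant derivatives of $\psi_n$ (for instance $[\widetilde X\delta_\e]\ast\psi$ equals $\widehat X\psi$ up to a zeroth-order modular term, where $\widehat X$ is the right-invariant field), not $(R^2+\Delta)^m\psi_n$. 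Indeed, if your identity held for all $\psi_n\in C_c^\infty(G)$, then applying the inversion involution and comparing with \eqref{conv1} would give $f(\SD)\bigl[(R^2+\Delta^\vee)^m\phi\bigr]=f(\SD)\bigl[(R^2+\Delta)^m\phi\bigr]$ for all test functions $\phi$, where $\Delta^\vee$ denotes the inversion-conjugate (right-invariant) Laplacian; injectivity of $f(\SD)$ on $L^2(G)$ then forces $\Delta=\Delta^\vee$, i.e.~bi-invariance of $\Delta$ --- true for abelian $G$, false for a generic left-invariant metric on a nonabelian group.

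The conclusion can still be reached, but the repair is not cosmetic. The paper's route: apply \eqref{conv1} to the vector-valued orbit map $\gamma_v$ (justified by the decay of $\kappa$ against the growth $w_\pi$, since $R>R_E$) and evaluate at $g=\e$; this produces $v=\int_G\kappa(g^{-1})\,\pi(g)\,d\pi((R^2+\Delta)^m)v\,dg$, and it is exactly here --- not where you used it --- that \eqref{kappa symmetric} together with realness of $\kappa$ is needed, to replace $\kappa(g^{-1})$ by $\kappa(g)$ and recognize $\Pi(\kappa)$. Alternatively, your Dirac-sequence scheme can be fixed by using the correct identity: $\Pi(\psi_n)v=\Pi(\kappa)\,\Pi\bigl([(R^2+\Delta)^m\delta_\e]\ast\psi_n\bigr)v=\Pi(\kappa)\,d\pi((R^2+\Delta)^m)\,\Pi(\psi_n)v$; but then the passage to the limit needs $d\pi((R^2+\Delta)^m)\Pi(\psi_n)v\to d\pi((R^2+\Delta)^m)v$, i.e.~Dirac approximation in $E^{2m}$ rather than in $E$, which does hold for $v\in E^{2m}$ but requires its own short argument. (Your integration-by-parts identity $\Pi((R^2+\Delta)^m\psi_n)v=\Pi(\psi_n)\,d\pi((R^2+\Delta)^m)v$ is correct, precisely because the paper's $\Delta$ contains the $\tr(\ad X_j)$ correction terms; the problem is solely the convolution identity feeding into it.)
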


\begin{proof}
Recall the factorization \eqref{factorization} of $\delta_\e$,
$$\delta_\e =    [(R^2 + \Delta)^m \delta_\e]\ast \kappa\ . $$
We claim $\kappa \in \RnG^{k'}$.  \textcolor{black}{Indeed, for $s=-2m$, $n= \dim G$ and $\varepsilon \in (0,1)$, Theorem \ref{Kernel} shows that $\kappa_1 \in C^{2m-\dim G - \varepsilon}_c(G) \subset \RnG^{k'}$ and $\kappa_2 \in \mathcal{S}_R(G) \subset \RnG^{k'}$.
We then obtain that
$$ \gamma_v = [(R^2 + \Delta)^m \gamma_v]\ast \kappa\ ,$$
see also \eqref{conv1}, and evaluation at $g=\e$ gives
$$ v = \gamma_v(\e) = \int_G  \kappa(g^{-1}) \pi (g) d\pi ((R^2 + \Delta)^m)v  \ dg\ . $$
Now recall from \eqref{kappa symmetric} that $\kappa(g)= \overline{\kappa(g^{-1})}$ and that with our choice of $f(z) = (R^2+z^2)^{-m}$ from
before the kernel $\kappa$ is even real. Hence
$$v = \Pi(\kappa)  d\pi((R^2 + \Delta)^m) v\ , $$
as asserted. }
\end{proof}

\begin{cor} \label{cor spectral gap} Let $R>R_E$. Then
$$d\pi\left(R^2+\Delta\right): E^\infty\to E^\infty$$
is invertible.
\end{cor}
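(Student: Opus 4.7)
The plan is to establish invertibility by proving injectivity and surjectivity separately, in both cases using the factorization identity of the preceding theorem. I will fix $m\in\N$ with $2m\geq\dim G+1$ and let $\kappa\in\RnG^{k'}$ (with $k'=2m-\dim G-1$) be the associated kernel, so that
$$v=\Pi(\kappa)\,d\pi((R^2+\Delta)^m)v \qquad (v\in E^\infty).$$
Injectivity is then immediate: if $d\pi(R^2+\Delta)v=0$ for some $v\in E^\infty$, then $d\pi((R^2+\Delta)^m)v=0$, and the factorization forces $v=0$.

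For surjectivity I plan first to show that $d\pi((R^2+\Delta)^m):E^\infty\to E^\infty$ is surjective; since $d\pi((R^2+\Delta)^m)=d\pi(R^2+\Delta)\circ d\pi((R^2+\Delta)^{m-1})$, surjectivity of $d\pi(R^2+\Delta)$ will follow at once. Given $w\in E^\infty$, the natural candidate is $v:=\Pi(\kappa)w$. A change of variables $y=gx$ under left Haar measure, combined with the symmetry $\kappa(x^{-1})=\kappa(x)$ from \eqref{kappa symmetric} and the reality of $\kappa$, identifies the orbit map $\gamma_v$ with the Banach-valued convolution $\gamma_w*\kappa$. The left-invariance of $(R^2+\Delta)^m$ together with the distributional identity $(R^2+\Delta)^m\kappa=\delta_\e$ inherent in Proposition \ref{factorize} then yields
$$(R^2+\Delta)^m\gamma_v=\gamma_w*[(R^2+\Delta)^m\kappa]=\gamma_w*\delta_\e=\gamma_w,$$
and evaluation at $\e$ gives $d\pi((R^2+\Delta)^m)v=w$.

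The main obstacle I expect is that the theorem only places $v=\Pi(\kappa)w$ in $E^{k'}$, whereas $v\in E^\infty$ is required. I plan to handle this by a bootstrap: for any $m'\in\N$ with $2m'\geq\dim G+1$, applying the theorem to $w$ produces a kernel $\kappa_{m'}$ with $w=\Pi(\kappa_{m'})\,d\pi((R^2+\Delta)^{m'})w$, and substituting together with the algebra-action property of $\Pi$ gives
$$v=\Pi(\kappa*\kappa_{m'})\,d\pi((R^2+\Delta)^{m'})w.$$
Uniqueness of the fundamental solution within the exponentially decaying class (inherited from the selfadjointness of $\Delta$ on $L^2(G)$) identifies $\kappa*\kappa_{m'}$ with the kernel produced by the theorem at parameter $m+m'$, which lies in $\RnG^{2(m+m')-\dim G-1}$. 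Hence $v\in E^{2(m+m')-\dim G-1}$ for every admissible $m'$, and letting $m'\to\infty$ gives $v\in E^\infty$, completing the surjectivity argument.
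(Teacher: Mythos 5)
Your proof is correct, but note that the paper offers no written argument for this corollary at all: it is presented as an immediate consequence of the factorization theorem. What that theorem literally gives is injectivity of $d\pi(R^2+\Delta)$ on $E^\infty$ (your first step, which is exactly the intended one) and a \emph{left} inverse $\Pi(\kappa)$ of $d\pi((R^2+\Delta)^m)$; surjectivity genuinely requires more, and your two additions supply precisely the missing content. First, the computation $\gamma_{\Pi(\kappa)w}=\gamma_w\ast\kappa$ (via the substitution and the symmetry \eqref{kappa symmetric}, parallel to the paper's own proof of the theorem) together with $(R^2+\Delta)^m(\gamma_w\ast\kappa)=\gamma_w\ast[(R^2+\Delta)^m\kappa]=\gamma_w$. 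Second, the bootstrap $v=\Pi(\kappa\ast\kappa_{m'})\,d\pi((R^2+\Delta)^{m'})w\in E^{2(m+m')-\dim G-1}$ for every admissible $m'$, which places $v$ in $E^\infty$; this is also what legitimizes reading the first computation classically and evaluating at $\e$, so logically the bootstrap must precede that evaluation (you flag the obstacle, but should order the argument accordingly). Two steps deserve cleaner justification than you give. The identity $(R^2+\Delta)^m\kappa=\delta_\e$ is not literally Proposition \ref{factorize}, where the operator falls on the other factor; it follows, as in the proof of that proposition, from the functional calculus: $(1/f)(\SD)\,f(\SD)\,\varphi=\varphi$ for $\varphi\in C_c^\infty(G)$, i.e.\ $(R^2+\Delta)^m(\varphi\ast\kappa)=\varphi$, and then one moves the left-invariant operator onto the right convolution factor. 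Similarly, your identification $\kappa\ast\kappa_{m'}=\kappa_{m+m'}$ is better obtained from multiplicativity of the functional calculus than from uniqueness of exponentially decaying fundamental solutions (as stated, that uniqueness argument would additionally require essential selfadjointness of \emph{powers} of $\Delta$ on $C_c^\infty(G)$, e.g.\ via Chernoff's theorem): with $f_j(z)=(R^2+z^2)^{-j}$, associativity of convolution gives, for all $\varphi\in C_c^\infty(G)$,
$$\varphi\ast(\kappa\ast\kappa_{m'})=(\varphi\ast\kappa)\ast\kappa_{m'}=f_{m'}(\SD)f_m(\SD)\varphi=f_{m+m'}(\SD)\varphi=\varphi\ast\kappa_{m+m'}\,,$$
whence the two kernels agree. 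With these two repairs your argument is complete and is, in substance, the fleshed-out version of what the paper leaves implicit.
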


\begin{rem} (Spectral gap for Banach representations) We can interpret
Corollary \ref{cor spectral gap} as a spectral gap theorem for Banach representations in terms
of $R_E= c_G + c_\pi$. However, we note that the bound $R>R_E$ can be improved for special classes
of representations. For example,
if $(\pi, E)$ is a unitary representation, then
$$\operatorname{Re} \langle d\pi(\Delta) v, v\rangle \geq 0$$
 for all $v \in E^\infty$, and hence
$d\pi(\Delta) + R^2$ is injective for all $R>0$. Moreover, the Lax-Milgram theorem implies
that $d\pi(\Delta)+R^2$ is in fact invertible. On the other hand, our bound in Corollary
\ref{cor spectral gap} gives information about the convolution kernel of the inverse of $d\pi(\Delta)+R^2$ for $R>c_G$.
\end{rem}

\section{Sobolev norms for Banach representations}

\subsection{Standard and Laplace Sobolev norms}
As before, we let $(\pi, E)$ be a Banach representation. On $E^\infty$, the space
of smooth vectors, one usually defines Sobolev norms as follows.
Let $p$ be the norm underlying $E$. We fix a basis ${\mathcal B}=\{ X_1, \ldots, X_n\}$ of
$\g$ and set
\begin{align*}p_k(v)&:=\Big[\sum_{m_1+\ldots+m_n\leq k} p(d\pi (X_1^{m_1}\cdot \ldots \cdot X_n^{m_n})v)^2 \Big]^{1\over 2}\qquad (v \in E^\infty)\, . \end{align*}
Strictly speaking this notion depends on the choice of the basis ${\mathcal B}$ and
$p_{k, {\mathcal B}}$ would be the more accurate notation. However, a different
choice of basis, say ${\mathcal C}=\{ Y_1, \ldots, Y_n\}$ leads to
an equivalent family of norms $p_{k, {\mathcal C}}$, i.e.~for all $k$ there exist constants
$c_k , C_k>0$ such that
\begin{equation} \label{sandwich} c_k \cdot p_{k, {\mathcal C}}(v) \leq p_{k,{\mathcal B}}(v)\leq C_k \cdot p_{k, {\mathcal C}}(v)\qquad (v\in E^\infty)\, .\end{equation}
Having said this, we now drop  the subscript ${\mathcal B}$ in the definition of $p_k$ and
simply refer to $p_k$ as the {\it standard $k$-th Sobolev norm} of $(\pi, E)$.
Note that $p_k$ is Hermitian (i.e.~obtained from a Hermitian inner product) if $p$ was Hermitian.
\par The completion of $(E^\infty, p_k)$ yields $E^k$. In particular, $(E^k, p_k)$ is a
Banach space for which the natural action $G\times E^k \to E^k$ is continuous, i.e.~defines a Banach representation.
\par The family $(p_k)_{k\in \N}$ induces a Fr\'echet structure on $E^\infty$  (in view of \eqref{sandwich} of course independent of the choice of ${\mathcal B}$) such that the natural
action $G\times E^\infty\to E^\infty$ becomes continuous.

\par Now we introduce a family of {\it Laplace Sobolev norms}, first of even order
$k \in 2 \N_0$, as follows. Let $R>R_E$ and set
\begin{align*}
{}^{\Delta} p_{k}(v)&:= p\left( d\pi\left((R^2+\Delta)^{k/2}\right) v\right) \qquad (v\in E^\infty) \ .
\end{align*}
Of course, a more accurate notation would include $R>0$, i.e.~write ${}^{\Delta,R} p_{k}$ instead of ${}^{\Delta} p_{k}$. In addition, $\Delta$ also depends on the basis ${\mathcal B}$. For purposes of readability we decided to suppress this data in the notation.

\begin{pro} {\rm (Comparison of the families $(p_{2k})_{k\in\N_0}$ and
$({}^{\Delta} p_{2k})_{k \in \N_0}$)} \label{prop Pb}\\
For all $k\in \N_0$ there exist $\textcolor{black}{c_k}$, $C_k>0$ such that for all $v \in E^{\infty}$
$$ \textcolor{black}{c_k \cdot {}^\Delta p_{2k}(v)\leq p_{2k}(v) \leq C_k \cdot {}^\Delta p_{2k+m}(v)\, ,}$$
\textcolor{black}{where $m$ is the smallest even integer greater or equal to $1+\dim G$.}
\end{pro}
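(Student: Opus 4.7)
The plan is to prove the two inequalities separately, since they follow from very different principles. The lower bound $c_k\cdot{}^\Delta p_{2k}(v)\leq p_{2k}(v)$ is an algebraic expansion argument. Since $\D=\sum_j(-\widetilde{X_j}-\tr(\ad X_j))\widetilde{X_j}$ is a polynomial of degree $2$ in the basis $X_1,\dots,X_n$, the element $(R^2+\D)^k\in\U(\gf)$ expands by the Poincar\'e-Birkhoff-Witt theorem as a finite linear combination of ordered monomials $X_1^{m_1}\cdots X_n^{m_n}$ of total degree at most $2k$. Applying $d\pi$, then $p$, the triangle inequality bounds ${}^\Delta p_{2k}(v)$ by a constant multiple of $p_{2k}(v)$, with the coefficients (depending on $R$, $\tr(\ad X_j)$ and the structure constants of $\gf$) absorbed into $c_k^{-1}$.

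The upper bound $p_{2k}(v)\leq C_k\cdot{}^\Delta p_{2k+m}(v)$ is the substantive direction and relies on the factorization theorem from the previous section. Set $2j:=2k+m$. Since $m$ is even, $j\in\N$; since $m\geq 1+\dim G$, the integer $k':=2j-\dim G-1$ satisfies $k'\geq 2k$. The theorem, applied with this $j$, furnishes a kernel $\kappa\in\RnG^{k'}\subset\RnG^{2k}$ with
$$ v \;=\; \Pi(\kappa)\, d\pi\bigl((R^2+\D)^{j}\bigr) v \qquad (v\in E^\infty)\,. $$
For any monomial $u=X_1^{m_1}\cdots X_n^{m_n}$ of degree $\leq 2k$, I would differentiate the identity $\pi(h)\Pi(\kappa)=\Pi(\pi_l(h)\kappa)$ at $h=\e$ to obtain an intertwining relation of the form
$$ d\pi(u)\,\Pi(\kappa) \;=\; \Pi(u^{R}\kappa)\,, $$
where $u^R$ is the right-invariant differential operator on $G$ associated with $u$ (up to an overall sign which plays no role here). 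Because $\kappa\in\RnG^{k'}$ and $\deg u\leq 2k\leq k'$, the function $u^R\kappa$ lies in $\RnG$, and since $R>R_E$ the operator $\Pi(u^R\kappa):E\to E$ is bounded with operator norm controlled by $\|u^R\kappa\|_{\RnG}$. Consequently
$$ p(d\pi(u)v) \;=\; p\bigl(\Pi(u^R\kappa)\,d\pi((R^2+\D)^{j})v\bigr) \;\leq\; C_u\cdot{}^\Delta p_{2k+m}(v)\,, $$
and summing the squares over the finitely many monomials of degree at most $2k$ produces the asserted constant $C_k$.

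The main point that needs care is the matching of regularities: the factorization theorem produces $\kappa\in\RnG^{k'}$, and one needs $k'$ to be at least $2k$ so that $u^R\kappa$ makes sense as an element of $\RnG$ for every $u$ of degree $\leq 2k$. This requirement $2j-\dim G-1\geq 2k$ is precisely what forces the shift $m\geq 1+\dim G$, and the evenness of $m$ enters because the factorization in Proposition \ref{factorize} is formulated only for integer powers of $R^2+\D$. Once these constraints are met, the bilinear continuity of $\RnG\times E\to E$ from Section 3 supplies the estimate on $\|\Pi(u^R\kappa)\|_{\mathrm{op}}$ automatically.
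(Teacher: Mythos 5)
Your proof is correct and follows essentially the same route as the paper's (very terse) proof: the first inequality by expanding $(R^2+\Delta)^k$ into ordered monomials of degree at most $2k$ and applying the triangle inequality, and the second by invoking the factorization \eqref{vfactorization} at order $2k+m$, so that the kernel lies in $\RnG^{k'}$ with $k'\geq 2k$ and the operators $\Pi(u^R\kappa)$ are bounded on $E$. The paper compresses this into two lines; your write-up supplies precisely the details those two lines refer to.
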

\begin{proof}
The first inequality follows directly from the definitions of $p_{2k}$, ${}^{\Delta} p_{2k}$.  The second is a consequence of the factorization \eqref{vfactorization}.
\end{proof}
\begin{rem}\label{regremark} In general it is not true that $p_{2k}$ is smaller than a multiple of
${}^\Delta p_{2k}$. In other words, an index shift as in Proposition \ref{prop Pb},
is actually needed. As an example we consider $E=C_0(\R^2)$ of continuous functions on $\R^2$
which vanish at infinity, endowed with the sup-norm $p(f)=\sup_{x\in \R^2} |f(x)|$.
Then $E$ becomes a Banach representation  for the regular action
of $G=(\R^2, +)$ by translation in the arguments. In this situation there exists a function $u\in
E$ such $\Delta u \in E$ but $\partial_y^2 u\not \in E$, see \cite[Problem 4.9]{gt}. Hence $p_2(u)=\infty$, while
${}^\Delta p_2(u)<\infty$.
\end{rem}

\subsection{Sobolev norms of continuous order $s\in\R$}
\subsubsection{Induced Sobolev norms}
In \cite{bk} Sobolev norms for a Banach representation $(\pi, E)$ were defined for all
parameters $s \in \R$. We briefly recall their construction.

We endow the continuous dual $E'$ of $E$ with the dual norm
$$p'(\lambda):=\sup_{p(v)\leq 1} |\lambda(v)| \qquad (\lambda\in E')\, .$$
For $\lambda \in E'$ and $v \in E^\infty$ we define the matrix coefficient
$$m_{\lambda, v}(g) =\lambda(\pi(g) v) \qquad (g\in G)\ ,$$
which is a smooth function on $G$.  Given  an open relatively compact neighborhood $B\subset G$ of $\e$, diffeomorphic to the
open unit ball in $\R^n$, we  fix $\varphi \in C_c^\infty(G)$ such that $\operatorname{supp} (\varphi)\subset B$
and $\phi(\e)=1$. The function $\phi \cdot m_{\lambda, \phi}$ is
then supported in $B$ and upon identifying $B$ with the open unit ball in $\R^n$, say
$B_{\R^n}$, we denote by $\|\phi \cdot m_{\lambda, v}\|_{H^s(\R^n)}$ the corresponding Sobolev
norm.
We then set
$$Sp_s(v):=\sup_{\lambda \in E'\atop p'(\lambda)\leq 1} \| \phi \cdot m_{\lambda, v}\|_{H^s(\R^n)}
\qquad (v\in E^\infty)\, .$$
In the terminology of \cite{bk} this defines a $G$-continuous norm on $E^\infty$.

\subsubsection{Laplace Sobolev norms}

For $R>R_E$ and $s\in \R$, on the other hand the functional calculus for $\SD$ also gives rise to a $G$-continuous norm on $E^\infty$: We define
\begin{equation} \label{Rost2}{}^{\Delta} p_{s}(v) := p((R^2+\Delta)^{s/2} \gamma_v(g) |_{g=\e})\qquad (v\in E^\infty) \, .\end{equation}

\subsubsection{Comparison results}

\begin{pro}  {\rm (Comparison of the families $(Sp_{s})_{s \geq 0}$ and
$({}^{\Delta} p_{s})_{s \geq 0}$)} \label{prop compare}
Let $R>R_E$. Then for all $s \geq 0$, $\eps>0$, there exist $c_s, C_s>0$ such that for all $v \in E^{\infty}$
$$c_s\cdot Sp_{s}(v) \leq {}^{\Delta} p_{s}(v) \leq C_s\cdot  Sp_{s+\frac{n}{2}+\varepsilon}(v) \ .$$
\end{pro}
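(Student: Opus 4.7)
The plan is to read both norms through a single scalar function, namely the matrix coefficient of $d\pi(P_s)v$ where $P_s := (R^2+\Delta)^{s/2}$, and then to compare the pointwise value at $\e$ (governing ${}^\Delta p_s$) with the local $H^s(\R^n)$-norm (governing $Sp_s$) via the pseudodifferential structure supplied by Theorem \ref{Kernel}. Concretely, left invariance of $P_s$ combined with the cocycle $\gamma_v(hg)=\pi(h)\gamma_v(g)$ yields
\[ (P_s m_{\lambda,v})(h)\ =\ \lambda(\pi(h)\,d\pi(P_s)v),\qquad (P_s m_{\lambda,v})(\e)\ =\ \lambda(d\pi(P_s)v). \]
Applying Theorem \ref{Kernel} to $f(z)=(R^2+z^2)^{s/2}\in\mathcal{F}_{R',s}$ with $R_E<R'<R$ splits $P_s=P_s^{\mathrm{loc}}+P_s^{\infty}$: here $P_s^{\mathrm{loc}}$ is a classical pseudodifferential operator of order $s$ whose kernel is supported in $B_\eps(\e)$, while $P_s^{\infty}$ has Schwartz kernel $\kappa_2\in\mathcal{S}_{R'}(G)$. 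I arrange the cutoff $\phi$ in the definition of $Sp_s$ to be identically $1$ on $B_\eps(\e)$, and identify the chart $B$ with the open unit ball in $\R^n$.

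For the second inequality I decompose $\lambda(d\pi(P_s)v)=(P_s^{\mathrm{loc}}(\phi m_{\lambda,v}))(\e)+(P_s^{\infty}m_{\lambda,v})(\e)$, the first equality using that $\operatorname{supp}\kappa_1\subset\{\phi\equiv 1\}$. The first summand is the pointwise value at $\e$ of an order-$s$ pdo on $\R^n$ applied to a compactly supported function: combining the continuity $P_s^{\mathrm{loc}}:H^{s+n/2+\eps}(\R^n)\to H^{n/2+\eps}(\R^n)$ with the Sobolev embedding $H^{n/2+\eps}(\R^n)\hookrightarrow C^0$ bounds it by $C\,\|\phi m_{\lambda,v}\|_{H^{s+n/2+\eps}(\R^n)}$. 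The second summand is the absolutely convergent integral $\int\kappa_2(x^{-1})\,m_{\lambda,v}(x)\,dx$ — the decay of $\kappa_2\in\mathcal{S}_{R'}(G)$ beats the growth $w_\pi\leq Ce^{c_\pi d(\cdot)}$ because $R'>R_E=c_\pi+c_G$ — hence bounded by $C\,p'(\lambda)\,p(v)$, and $p(v)$ is in turn controlled by $Sp_{n/2+\eps}(v)$ via $|m_{\lambda,v}(\e)|\leq C\|\phi m_{\lambda,v}\|_{H^{n/2+\eps}}$. Taking the supremum over $p'(\lambda)\leq 1$ yields the second inequality.

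For the first inequality I invert $P_s^{\mathrm{loc}}$ locally by a parametrix $Q$ of order $-s$, writing $QP_s^{\mathrm{loc}}=\Id-T$ with $T$ smoothing, and applying this to $m_{\lambda,v}$,
\[ \phi m_{\lambda,v}\ =\ \phi Q(P_s m_{\lambda,v})\ -\ \phi Q(P_s^{\infty} m_{\lambda,v})\ +\ \phi T m_{\lambda,v}. \]
The key identity gives $P_s m_{\lambda,v}=m_{\lambda,\,d\pi(P_s)v}$, so $|P_s m_{\lambda,v}(g)|\leq C\,p'(\lambda)\,{}^\Delta p_s(v)$ uniformly for $g$ in a compact neighbourhood of $\operatorname{supp}\phi$; analogously $|P_s^{\infty}m_{\lambda,v}|,\ |m_{\lambda,v}|\leq C\,p'(\lambda)\,p(v)$ on the same set. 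Because $Q$ of order $-s$ maps $L^2\to H^s$ boundedly on compacta and $T$ is smoothing, this yields $\|\phi m_{\lambda,v}\|_{H^s(\R^n)}\leq C\,p'(\lambda)\,({}^\Delta p_s(v)+p(v))$. To absorb the baseline term I argue that $(R^2+\Delta)^{-s/2}$ is represented, via Theorem \ref{Kernel} applied to $(R^2+z^2)^{-s/2}$, by convolution with a kernel in $\RnG$ (locally integrable at $\e$ for $s>0$, with exponential decay away), so that the resulting $\Pi(\kappa_{P_{-s}})$ is a $p$-bounded inverse of $d\pi(P_s)$ on $E^\infty$, giving $p(v)\leq C_s\,{}^\Delta p_s(v)$.

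The main technical obstacle is the book-keeping for the pseudodifferential calculus on $G$ through the exponential chart: in particular, the commutator $[P_s^{\mathrm{loc}},\phi]$ is of order $s-1$ and must be controlled — typically by an inductive bootstrap across integer Sobolev indices — and the parametrix $Q$ must be constructed so that it commutes with the localization up to lower-order terms. The second delicate point is the uniform absorption of $p(v)$ by ${}^\Delta p_s(v)$ for small $s>0$, where invertibility of $d\pi(P_s)$ at the Banach level is subtler than Corollary \ref{cor spectral gap} and relies on explicit analysis of the order-$(-s)$ convolution kernel.
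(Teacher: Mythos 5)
Your proposal is correct in overall structure, and the two halves should be judged separately.

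For the second inequality your argument is essentially the paper's own, carried out with more care. The paper invokes the Sobolev embedding $H^{s+\frac{n}{2}+\eps}(\R^n)\hookrightarrow C^s$ and asserts in one line that $\|\phi\, m_{\lambda,v}\|_{C^s(B_{\R^n})}\gtrsim |\lambda((R^2+\Delta)^{s/2}\pi(g)v)|_{g=\e}|$, leaving implicit both the splitting of the convolution kernel into a local pseudodifferential part and a rapidly decaying tail, and the control of the tail's pairing with the exponentially growing matrix coefficient away from $\operatorname{supp}\phi$. Your decomposition $P_s=P_s^{\mathrm{loc}}+P_s^{\infty}$ via Theorem \ref{Kernel}, the bound on $(P_s^{\mathrm{loc}}(\phi m_{\lambda,v}))(\e)$ through $H^{s+\frac{n}{2}+\eps}\to H^{\frac{n}{2}+\eps}\hookrightarrow C^0$, and the absolute convergence of $\int \kappa_2(x^{-1})m_{\lambda,v}(x)\,dx$ from $R'>R_E$ make exactly these steps explicit. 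Two small remarks: you need $\phi\equiv 1$ on $B_\eps(\e)$ rather than merely $\phi(\e)=1$, which is legitimate only because $Sp_s$ is independent of the choice of cutoff up to equivalence (a fact from \cite{bk} worth citing); and your route through $H^{\frac{n}{2}+\eps}\hookrightarrow C^0$ is actually cleaner than the paper's passage through Hölder norms.

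For the first inequality you take a genuinely different route. The paper does not prove it analytically: it cites \cite{bk}, where $Sp_{2k}\lesssim {}^{\Delta}p_{2k}$ is established for even integer orders, and extends to all $s\geq 0$ by interpolation. You instead construct a parametrix $Q$ for $P_s^{\mathrm{loc}}$ and estimate $\|\phi m_{\lambda,v}\|_{H^s}$ directly. This buys self-containedness (no appeal to \cite{bk}, and no interpolation of operator-defined norm families, which itself needs justification), at the cost of the pseudodifferential book-keeping you have flagged (proper supports, commutators with $\phi$) — that part is standard and repairable. The one step I would call a genuine gap rather than book-keeping is the absorption $p(v)\leq C_s\,{}^{\Delta}p_s(v)$ for fractional $s>0$. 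You propose to invert $d\pi(P_s)$ by $\Pi(\kappa_{P_{-s}})$, but the paper's proof of Proposition \ref{factorize} relies crucially on $(R'^2+\Delta)^m\varphi\in C_c^\infty(G)$ for $\varphi\in C_c^\infty(G)$, which fails for the nonlocal operator $(R^2+\Delta)^{s/2}$ when $s\notin 2\N$: the identity $\delta_\e=[P_s\delta_\e]\ast\kappa_{P_{-s}}$ and its transfer to orbit maps $\gamma_v$ then require a separate argument (e.g.\ justifying associativity of $\gamma_v\ast\kappa_{f_s}\ast\kappa_{f_{-s}}$ using the $\mathcal{S}_{R'}(G)$ decay and $R'>R_E$), or a fallback on the even-order factorization plus interpolation, which is precisely the paper's route. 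Until that step is supplied, your proof of the first inequality is complete only for $s\in 2\N_0$.
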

\begin{proof}
The first inequality was shown in \cite{bk} for $k \in 2 \mathbb{N}$. It follows for all $s \geq 0$ by interpolation.

For the second  inequality, we apply the standard Sobolev embedding theorem for
$\R^n$ and obtain that
$$\|\phi \cdot m_{\lambda, v} \|_{H^{s+\frac{n}{2}+\varepsilon}(\R^n)} \gtrsim \| \phi\cdot m_{\lambda, v}\|_{C^{s}(B_{\R^n})} \gtrsim |\lambda((R^2+\Delta)^{s/2} \pi(g)v)|_{g=\e}|\ .$$
The assertion follows by taking the supremum over $\lambda \in E'$ with $p'(\lambda)\leq 1$.
\end{proof}

\subsection{Sobolev norms of order $s\leq 0$}
The natural way to define negative Sobolev norms is by duality. For a Banach representation
$(\pi, E)$ with defining norm $p$ and $k\in\N_0$ we let $p_k'$ be the norm
of $(E')^k$ and define $p_{-k}$ as the dual norm of $p_{k}'$, i.e.
$$p_{-k}:= (p_k')' \, .$$
The norm $p_{-k}$ is naturally defined on $((E')^k)'$.  Now observe that the natural inclusion
$(E')^k \hookrightarrow  E'$ is continuous with dense image and thus
yields a continuous dual inclusion $E''\hookrightarrow ((E')^k)'$. The double-dual
$E''$ contains $E$ in an isometric fashion. Hence $p_{-k}$ gives rise to a
natural norm on $E$, henceforth denoted by the same symbol,  and the completion of $E$ with respect to $p_{-k}$ will be denoted by $E^{-k}$.

\begin{rem} In case $E$ is reflexive, i.e.~$E''\simeq E$, the space $E^{-k}$ is isomorphic to the
strong dual of $(E')^k$.
\end{rem}

\par On the other hand we have seen that the families $(p_k)_k$ and $\left({}^\Delta p_k\right)_k$ are equivalent. In this regard we note that
${}^\Delta p_{-k}$ as defined in \eqref{Rost2} coincides with the dual norm of ${}^\Delta p'_k$.

As a corollary of Proposition \ref{prop Pb} (and interpolation to also non-even indices
 $k\in \N_0$) we have:

 \begin{cor} For all $k\in \N_0$ there exist constants $c_k, C_k>0$ such that
 \begin{equation} c_k\cdot p_{-k}(v) \leq {}^\Delta p_{-k}(v) \leq C_k \cdot p_{-k+n+1}(v) \qquad (v\in E^\infty)\, .\end{equation}
 \end{cor}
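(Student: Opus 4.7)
The plan is to derive the Corollary by applying Proposition \ref{prop Pb} to the contragredient Banach representation $(\pi', E')$ and then dualizing. Observe first that the growth rates of $\pi$ and $\pi'$ agree, since $\|\pi'(g)\| = \|\pi(g^{-1})\|$ and $d(g) = d(g^{-1})$; hence $R > R_E$ implies $R > R_{E'}$, and the hypotheses of Proposition \ref{prop Pb} are met for $(\pi', E')$ on $(E')^\infty$.

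Applying Proposition \ref{prop Pb} to $(\pi', E')$ yields, for each $j \in \N_0$, constants $\tilde c_j, \tilde C_j > 0$ such that
$$\tilde c_j \cdot {}^\Delta p'_{2j}(\lambda) \leq p'_{2j}(\lambda) \leq \tilde C_j \cdot {}^\Delta p'_{2j+m}(\lambda) \qquad (\lambda \in (E')^\infty).$$
Passing to dual norms reverses both inequalities, with the constants $\tilde c_j$ and $\tilde C_j$ relocating to the opposite sides. Using the identifications $p_{-k} = (p'_k)'$ and ${}^\Delta p_{-k} = ({}^\Delta p'_k)'$ recorded just above the Corollary, and viewing $v \in E$ inside $E''$ via the canonical isometric bidual embedding, one obtains
$$p_{-2j}(v) \leq \tilde c_j^{-1} \cdot {}^\Delta p_{-2j}(v), \qquad {}^\Delta p_{-2j-m}(v) \leq \tilde C_j \cdot p_{-2j}(v).$$
Relabeling $\tilde k := 2j + m$ in the second inequality gives ${}^\Delta p_{-\tilde k}(v) \leq \tilde C_j \cdot p_{-\tilde k + m}(v)$, which is the desired upper bound at even indices with shift $m$.

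To cover odd $k$ and, where necessary, to replace the shift $m \in \{n+1, n+2\}$ by $n+1$, I would interpolate. The family ${}^\Delta p_s$ is defined on $E^\infty$ for all real $s$ via the functional calculus of $(R^2+\Delta)^{s/2}$ and automatically satisfies a three-lines estimate in $s$. Combining the even-index comparisons at $k$ and $k+2$ with the Stein--Calder\'on interpolation theorem applied to the analytic family $z \mapsto d\pi\bigl((R^2+\Delta)^{z/2}\bigr)$ extends the bounds to all intermediate $s$, and in particular to every integer $k$. The main obstacle I expect is precisely this interpolation step for the combinatorially defined family $p_k$: since $p_k$ is not given by a single operator, one must transfer interpolation through Proposition \ref{prop Pb} (equivalently, through the factorization \eqref{vfactorization}), or invoke a reiteration theorem after comparing $p_k$ with a Laplace--Sobolev norm of appropriate order at each even $k$; the fractional endpoint then yields the shift $n+1$ stated in the Corollary.
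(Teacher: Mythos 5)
Your proposal takes essentially the same route as the paper: the paper obtains the Corollary precisely by dualizing Proposition \ref{prop Pb} for the dual representation (via the identifications $p_{-k}=(p_k')'$ and ${}^\Delta p_{-k}=({}^\Delta p_k')'$ noted just before the statement) and then invoking interpolation to handle non-even indices and sharpen the shift from $m$ to $n+1$. Your added verification that $R>R_E$ forces $R>R_{E'}$ (using $\|\pi'(g)\|=\|\pi(g^{-1})\|$ and $d(g)=d(g^{-1})$, the latter by left invariance of the metric) is a correct detail the paper leaves implicit, and the interpolation subtlety you flag is left equally unelaborated in the paper's one-line proof.
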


\end{document}